\newcommand{\rem}[1]{}
\newtheorem{theorem}{Theorem}[section]
\theoremstyle{definition}
\newtheorem{definition}[theorem]{Definition}
\newtheorem{example}[theorem]{Example}
\renewcommand\leq{\leqslant}
\renewcommand\geq{\geqslant}
\title[Infinite-dimensional linear port-Hamiltonian systems]{Infinite-dimensional linear port-Hamiltonian systems on a one-dimensional spatial domain: An Introduction}
\author{Birgit Jacob \and Hans Zwart}
\begin{document}
	\maketitle

\begin{abstract}
We provide an introduction to infinite-dimensional port-Hamil\-tonian systems.
As this research field is quite rich, we restrict ourselves to the class of infinite-dimensional linear port-Hamiltonian systems on a one-dimensional spatial domainand we will focus on topics such as Dirac structures, well-posedness, stability and stabilizability, Riesz-bases and dissipativity. We combine the  abstract operator theoretic approach with  the more physical approach based on Hamiltonians. This enables us to derive easy verifiable conditions for well-posedness and stability. 
\end{abstract}

\maketitle

\section{Introduction}
Systems described by partial differential equations (PDEs) can be investigated either by operator theoretic or PDE methods. The PDE methods are specialized to specific classes of PDEs, and therefore lead to  refined results. The operator theoretic methods formulate the main concepts and investigate their interconnections. The advantage of the operator theoretic approach is that it allows for a general abstract framework.
In this survey, we combine the  abstract operator theoretic approach with  a more physical approach based on Hamiltonians in order to   derive easy verifiable conditions for well-posedness and stability of port-Hamiltonian systems.

Many physical systems can be formulated using a Hamiltonian framework. This class of systems
contains ordinary as well as partial differential equations. Each system in this class has a Hamiltonian, generally given by the energy function. In the study
of Hamiltonian systems it is usually assumed that the system does not interact with its
environment. However, for the purpose of control and for the interconnection of two or
more Hamiltonian systems it is essential to take this interaction with the environment into
account. This led to the class of port-Hamiltonian systems, see
\cite{vS06,ScMa02}. The Hamiltonian/energy has been used to control a port-Hamiltonian system,
see e.g.\ \cite{BCEJLLM09,CeSB07,HDLM10,OvSME02}. For port-Hamiltonian systems described by ordinary differential equations this approach is very successful, see the references mentioned above. Port-Hamiltonian systems described by partial differential equation is a subject of current research, see e.g.\ \cite{EbMS07,JeSc09,KZSB10,MM05}.


\section{Example of a port-Hamiltonian system}
\label{examples}

Various control systems can be modeled by partial differential equations such as vibrating strings, flexible structures, the propagation of sound waves, and networks of strings or flexible structures.  Here we consider the simple example of a transmission line with boundary control and observation. 
\begin{example}\label{ex1}
Transmission lines with  boundary controls  are used for electric power transmission.
\begin{figure}[b]
\centering
\includegraphics[width=9cm]{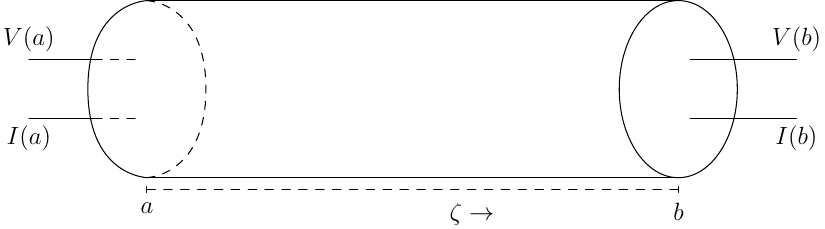}
\caption{Schematic representation of the transmission line}\label{TLsc}
\end{figure}
The problem can be approximated by the 1D system of Figure \ref{TLsc} representing propagation of electric charges and magnetic fluxes.  

The {\em lossless transmission line} on the spatial interval $[a,b]$ is described by the PDE:
  \begin{align}
  \label{eq:7.1.5}
    \frac{\partial Q }{\partial t}(\zeta,t) &= -\frac{\partial}{\partial \zeta} \frac{\phi(\zeta,t)}{L(\zeta)}\\
  \nonumber
    \frac{\partial \phi}{\partial t}(\zeta,t) &= -\frac{\partial}{\partial \zeta} \frac{Q(\zeta,t)}{C(\zeta)}.
  \end{align}
  Here $Q(\zeta,t)$ is the charge at position $\zeta\in [a,b]$ and time $t>0$,
  and $\phi(\zeta,t)$ is the (magnetic) flux at position $\zeta$ and time $t$. $C$ is the
  (distributed) capacity and $L$ is the (distributed) inductance.
  The voltage and current are given by $V=Q/C$ and $I=\phi/L$,
  respectively. The energy of this system is given by
  \begin{equation*}
    E(t) = \frac{1}{2} \int_a^b \frac{\phi(\zeta,t)^2}{L(\zeta)} + \frac{Q(\zeta,t)^2}{C(\zeta)} d \zeta.
  \end{equation*}
 The control, boundary conditions  and observation associated to this problem are 
\begin{align*}
  V\left(a,t\right)=u(t) 
 && V\left(b,t\right)=RI\left(b,t\right)&& I\left(a,t\right)=y(t) 
\end{align*}
  corresponding to a controlled voltage at point $a$, a resistive charge at point $b$ and an observed current at point $a$. Here $u(t)$ is the control, $y(t)$ the observation and $R\ge 0$ is the resistor. 
  For the change of energy we obtain
  \begin{align}
    \nonumber
    \frac{dE}{dt} (t) &= \int_a^b \frac{\phi(\zeta,t)}{L(\zeta)}\frac{\partial \phi}{\partial t}(\zeta,t) + \frac{Q(\zeta,t)}{C(\zeta)}\frac{\partial Q}{\partial t}(\zeta,t) d \zeta\\
    \nonumber
    &=\int_a^b - \frac{\phi(\zeta,t)}{L(\zeta)}\frac{\partial}{\partial \zeta} \frac{Q(\zeta,t)}{C(\zeta)} -  \frac{Q(\zeta,t)}{C(\zeta)}\frac{\partial}{\partial \zeta} \frac{\phi(\zeta,t)}{L(\zeta)} d \zeta \\
  \nonumber
  &= - \int_a^b \frac{\partial}{\partial \zeta} \left(  \frac{\phi(\zeta,t)}{L(\zeta)}\frac{Q(\zeta,t)}{C(\zeta)} \right) d \zeta\\
  &= \frac{\phi(a,t)}{L(a)}\frac{Q(a,t)}{C(a)} - \frac{\phi(b,t)}{L(b)}\frac{Q(b,t)}{C(b)}\nonumber
 \\
  &= V(a,t)I(a,t)-V(b,t)I(b,t) \nonumber \\
  &=u(t)y(t) - RI(b,t)^2 ,\label{eq:1.3}
  \end{align}
  where we used the boundary conditions. 
  Equation (\ref{eq:1.3}) shows that the change of energy can only occur via the boundary.
  Since voltage times current equals power and the change of energy is
  also power, this equation represents a power balance.
\end{example}

\section{Class of port-Hamiltonian systems}
\label{SEC:2.4}

Many physical systems can be modelled by the following equation
\begin{align}\nonumber
  \frac{\partial x}{\partial t}(\zeta,t) &= P_1 \frac{\partial }{\partial \zeta} \left({\mathcal H}(\zeta) x(\zeta,t) \right) + P_0 \left( {\mathcal H}(\zeta) x(\zeta,t) \right), \quad \zeta\in (a,b), t>0,\\
x(\zeta,0) &= x_0(\zeta), \qquad \zeta\in (a,b), \nonumber\\
 u(t) &=\widetilde W_{B,1} \begin{pmatrix} {\mathcal
        H}(b) x(b,t) \\ {\mathcal
        H}(a) x(a,t) \end{pmatrix},\qquad t>0,\label{eq:1.2.8}\\
 0 &=\widetilde W_{B,2} \begin{pmatrix} {\mathcal
        H}(b) x(b,t) \\ {\mathcal
        H}(a) x(a,t) \end{pmatrix},\qquad t>0,\nonumber\\
        y(t) &=\widetilde W_{C} \begin{pmatrix} {\mathcal
        H}(b) x(b,t) \\ {\mathcal
        H}(a) x(a,t) \end{pmatrix},\qquad t>0.\nonumber
\end{align}
Here $P_1\in \mathbb R^{n\times n}$ is invertible and self-adjoint, i.e., $P_1^\top=P_1$, and $P_0\in \mathbb R^{n\times n}$  is  {\em skew-adjoint}, i.e., $P_0^\top=-P_0$.
We assume that ${\mathcal H}\in L^\infty((a,b);\mathbb R^{n\times n})$, for every $\zeta \in [a,b]$,
${\mathcal H}(\zeta)$ a is self-adjoint matrix and there exist constants $c,C>0$, such that $cI \leq
{\mathcal H}(\zeta) \leq C I$ for almost every $\zeta\in[a,b]$. Finally,  $\widetilde W_{B,1}$ is a $m\times 2n$-matrix, $\widetilde W_{B,2}$ is a $(n-m)\times 2n$-matrix, 
 and $\widetilde W_C$ a $m\times 2n$-matrix. 
 Here
$u(t)\in \mathbb R^m$ denotes the input  and  $y(t)\in \mathbb R^m$ the output  at time $t$. 
We call \eqref{eq:1.2.8} a {\em port-Hamiltonian system}. We remark that the operator $P_1 \frac{\partial }{\partial \zeta}  + P_0$ is formally skew-adjoint on $L^2((a,b);\mathbb R^n)$.

The energy or Hamiltonian can be expressed by using $x$ and ${\mathcal H}$. That is
\begin{equation}
  \label{eq:1.2.9}
  E(x(\cdot,t)) = \frac{1}{2} \int_a^b x(\zeta,t)^\top {\mathcal H}(\zeta) x(\zeta,t) d \zeta.
\end{equation}
In Example \ref{ex1}, the change of energy (power) of the system was only possible via the boundary of its spatial domain. In general, for the Hamiltonian given
  by \eqref{eq:1.2.9} 
  the following balance equation holds for all
  (classical) solutions of \eqref{eq:1.2.8} 
  \begin{equation}
    \label{eq:1.2.10}
    \frac{dE}{dt}(x(\cdot,t)) = \frac{1}{2} \left[ \left({\mathcal H}
  x\right)^\top\! (\zeta,t) P_1  \left({\mathcal H} x\right)(\zeta,t) \right]_a^b 
  .
  \end{equation}
This balance  equation will prove to be very important and will be useful in many problems, such as the existence of solutions and stability.
\begin{example}[Lossless transmission line] 
\label{E:2.4.3}

\mbox{}\newline
If we introduce in Example \ref{ex1} the variables $x_1=Q$ and $x_2=\phi$,  Equation \eqref{eq:7.1.5}
can be written as
\begin{equation*}
  \frac{\partial }{\partial t} \begin{bmatrix} x_1(\zeta,t) \\ x_2(\zeta,t) \end{bmatrix} =  -\frac{\partial }{\partial \zeta}\left(\begin{bmatrix} 0 & 1 \\ 1 & 0 \end{bmatrix} \begin{bmatrix} \frac{1}{C(\zeta)} & 0 \\ 0 & \frac{1}{L(\zeta)} \end{bmatrix} \begin{bmatrix} x_1(\zeta,t) \\ x_2(\zeta,t) \end{bmatrix} \right) 
\end{equation*}
which is of the form of Equation \eqref{eq:1.2.8} with
\begin{equation*}
\label{eq:p1}
P_1=\begin{bmatrix}0 & -1 \\ -1 & 0\end{bmatrix}, \quad P_0=0, \quad {\mathcal H}(\zeta)=\begin{bmatrix} \frac{1}{C(\zeta)} & 0 \\ 0 & \frac{1}{L(\zeta)}\end{bmatrix}.
\end{equation*}
The boundary condition, control and observation  can be rewritten as
\begin{align*}
\label{ex:1boundary}
  \begin{bmatrix} 0&0& 1&0\\
  1 &-R&0&0\end{bmatrix}  \begin{bmatrix} \frac{x_1}{C}\left(b,t\right) \\ \frac{x_2}{L}\left(b,t\right)\\
  \frac{x_1}{C}\left(a,t\right)\\ \frac{x_2}{L}\left(a,t\right)\end{bmatrix} &=\begin{bmatrix} u(t) \\0\end{bmatrix},\quad \begin{bmatrix} 0&0& 0&1\end{bmatrix}  \begin{bmatrix} \frac{x_1}{C}\left(b,t\right) \\ \frac{x_2}{L}\left(b,t\right)\\
  \frac{x_1}{C}\left(a,t\right)\\ \frac{x_2}{L}\left(a,t\right)\end{bmatrix} =y(t),
\end{align*}
that is, $\widetilde W_{B,1}= \begin{bmatrix} 0&0& 1&0\end{bmatrix} $, $\widetilde W_{B,2}= \begin{bmatrix} 
  1 &-R&0&0\end{bmatrix}$, $\widetilde W_C=\begin{bmatrix} 0&0& 0&1\end{bmatrix} $,  $n=2$, and $m=1$.
Finally, the Hamiltonian is written as
\[
E(x(\cdot,t)) = \frac{1}{2} \int_a^b \frac{x_1(\zeta,t)^2}{C(\zeta)} + \frac{x_2(\zeta,t)^2}{L(\zeta)} d \zeta. 
\]
\end{example}

\section{Dirac structures and port-Hamiltonian systems}
\label{sec:1.3}

In the previous section we have seen a class of partial differential equations satisfying a power balance (\ref{eq:1.2.10}). In this section, we show that there is an underlying structure capturing this conversation of energy. This structure is known as a Dirac structure, which will be defined next.
\begin{definition}
  \label{D:1.3.4}%
  Let ${\mathcal E}$ and ${\mathcal F}$ be two (linear) vector spaces which are dual to each other with duality product 
    \begin{equation}
  \label{eq:1.3.10}
    \langle f,e \rangle_{{\mathcal F},{\mathcal E}}, \quad e\in {\mathcal E}, f \in {\mathcal F}.
  \end{equation}
  The \emph{bond space} ${\mathcal B}$ is defined as ${\mathcal F} \times {\mathcal E}$. On ${\mathcal B}$ we define the following symmetric pairing
  \begin{equation}
    \label{eq:1.3.11}
    \left\langle \left(\begin{array}{c} f_1\\ e_1 \end{array}\right), \left(\begin{array}{c} f_2\\ e_2 \end{array}\right) \right\rangle_+ = \langle f_1, e_2 \rangle_{{\mathcal F},{\mathcal E}} +\langle f_2, e_1 \rangle_{{\mathcal F},{\mathcal E}} .
  \end{equation}
  Let ${\mathcal V}$ be a linear subspace of ${\mathcal B}$, then the orthogonal subspace with respect to the symmetric pairing (\ref{eq:1.3.11}) is defined as
  \begin{equation}
    \label{eq:1.3.12}%
    {\mathcal V}^{\perp} = \{ b \in {\mathcal B} \mid \langle b,v \rangle_+ = 0 \mbox{ for all } v \in {\mathcal V} \}.
  \end{equation}
  A \emph{Dirac structure} ${\mathcal D}$ is a linear subspace of the bond space  satisfying
  \begin{equation}
    \label{eq:1.3.13}
      {\mathcal D}^{\perp} = {\mathcal D}.
  \end{equation}
\end{definition}

The variables $e$ and $f$ are called the \emph{effort} and \emph{flow},
respectively, and their spaces ${\mathcal E}$ and ${\mathcal F}$ are
called the {\em effort}\index{effort space} and \emph{flow space}. The
bilinear product $\langle f, e
\rangle_{{\mathcal F},{\mathcal E}}$ is called the \emph{power} or \emph{power
  product}.
  
A simple but important fact is that the power of any element $b=(f,e)\in {\mathcal D}$ in a Dirac structure ${\mathcal D}$ is zero, which follows from  the following calculation
\begin{equation}
  \label{eq:1.3.14}%
  2  \langle f, e \rangle_{{\mathcal F},{\mathcal E}} = \langle f, e \rangle_{{\mathcal F},{\mathcal E}} + \langle f, e \rangle_{{\mathcal F},{\mathcal E}}  = \langle b, b\rangle_+ = 0,
\end{equation}
because (first) $b \in {\mathcal D}$ and (second) $b \in {\mathcal D}^{\perp}$.

A Dirac structure can been seen as the largest subspace for which this holds i.e., if ${\mathcal V}$ is a subspace of ${\mathcal B}$ satisfying (\ref{eq:1.3.14}), then ${\mathcal V}$ is a Dirac structure if there does not exists a subspace ${\mathcal W}$ such that ${\mathcal V} \subset {\mathcal W}$, ${\mathcal V} \neq {\mathcal W}$, and the power of every element in ${\mathcal W}$ is zero.

Comparing the conservation of power (\ref{eq:1.3.14}) with the balance equation of our port-Hamiltonian system (\ref{eq:1.2.10}), we notice that in our system there are two power flows. Namely the internal power and the power over the boundary. If we want that the total power remains zero, then we have to consider these two powers together. 
\begin{definition}
  Given the conditions on $P_0$, $P_1$ we define the following Dirac structure associated to our port-Hamiltonian system (\ref{eq:1.2.8}). We choose 
$${\mathcal E}={\mathcal F}=L^2((a,b);{\mathbb R}^n) \times {\mathbb R}^n$$
 with duality product
  \begin{equation}
  \label{eq:1HZ}
    \left\langle \left( \begin{matrix} f \\ f_{\partial} \end{matrix} \right), \left( \begin{matrix} e\\ e_{\partial} \end{matrix} \right)\right\rangle = \int_a^b f(\zeta)^T e(\zeta) d\zeta - f_{\partial}^T e_{\partial}.
  \end{equation}
  \begin{align}
  \nonumber
    {\mathcal D} =&\ \left\{  \left( \begin{array}{c} f \\ f_{\partial}\\ e\\ e_{\partial}  \end{array} \right) \in {\mathcal F} \times {\mathcal E} \mid f \in L^2((a,b);{\mathbb R}^n), e  \in H^1((a,b);{\mathbb R}^n) \right.\\
      \label{eq:2HZ}
    &\qquad  \left. f = P_1 \frac{de}{d\zeta} + P_0 e , \left( \begin{matrix}  f_{\partial}  \\  e_{\partial}\end{matrix} \right) =
   \frac{1}{\sqrt{2}} \left[ \begin{array}{cc} P_1 & -P_1 \\ I &  I \end{array} \right]  \left( \begin{matrix}  e(b) \\  e(a) \end{matrix} \right)
    \right\}
  \end{align}
\end{definition}

Thus this Dirac structure has internal and external variables, 
and it depicted in Figure \ref{fig:1.2}.
\begin{figure}[ht]
  \centering
   \includegraphics[scale=1.0]{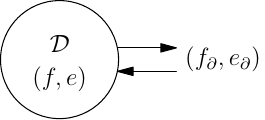}
  \caption{Dirac structure}
  \label{fig:1.2}
\end{figure}

\begin{theorem}
  Under the conditions on $P_1$ and $P_0$ the linear subspace as defined in (\ref{eq:2HZ}) is a Dirac structure under the pairing (\ref{eq:1HZ}).
\end{theorem}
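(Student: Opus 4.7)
My plan is to prove the two inclusions $\mathcal{D}\subseteq \mathcal{D}^\perp$ and $\mathcal{D}^\perp\subseteq \mathcal{D}$ separately. Writing elements of $\mathcal{D}$ as $(f_i,f_{\partial,i},e_i,e_{\partial,i})$ with $f_i = P_1 e_i'+P_0 e_i$ and $(f_{\partial,i},e_{\partial,i})$ given by the boundary formula, the first inclusion reduces to verifying that
\[
\int_a^b f_1^T e_2 + f_2^T e_1 \,d\zeta \;=\; f_{\partial,1}^T e_{\partial,2} + f_{\partial,2}^T e_{\partial,1}.
\]
The left-hand side will be computed by integration by parts: the $P_0$ contribution vanishes pointwise because $P_0^T=-P_0$ (the two scalar terms $e_1^T P_0^T e_2$ and $e_2^T P_0^T e_1$ are negatives of each other after transposition), while the $P_1$ contribution collapses, via $P_1^T=P_1$ and the product rule, into the exact boundary term $[e_1^T P_1 e_2]_a^b$. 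On the right-hand side I plug in the definitions of $f_{\partial,i},e_{\partial,i}$; expanding the $2\times 2$ block matrix the diagonal terms give $e_1(b)^T P_1 e_2(b) - e_1(a)^T P_1 e_2(a)$ and the cross terms cancel pairwise (again using symmetry of $P_1$ to flip scalar transposes). This should match the boundary expression, giving $\mathcal{D}\subseteq\mathcal{D}^\perp$.

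For the harder direction $\mathcal{D}^\perp\subseteq\mathcal{D}$, take $(\tilde f,\tilde f_\partial,\tilde e,\tilde e_\partial)\in\mathcal{D}^\perp$. The plan is to peel the problem into an interior part and a boundary part. First I restrict to test elements of $\mathcal{D}$ with $e\in C_c^\infty((a,b);\mathbb{R}^n)$, which forces $f_\partial=e_\partial=0$; the orthogonality condition then reads
\[
\int_a^b (\tilde f - P_0 \tilde e)^T e \,d\zeta = -\int_a^b (e')^T P_1 \tilde e \,d\zeta \qquad \forall\, e\in C_c^\infty.
\]
This says $P_1 \tilde e$ has a distributional derivative equal to $\tilde f - P_0\tilde e\in L^2$, so (using invertibility of $P_1$) $\tilde e\in H^1((a,b);\mathbb{R}^n)$ and $\tilde f = P_1 \tilde e' + P_0 \tilde e$. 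Thus the interior part of the defining relations for $\mathcal{D}$ holds.

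Having established that, I apply the first inclusion's identity with $(\tilde f,\tilde e)$ in place of $(f_1,e_1)$: the volume integral in the pairing collapses to $\tilde e(b)^T P_1 e(b) - \tilde e(a)^T P_1 e(a)$, which I then recognize as $\hat f_\partial^T e_\partial + f_\partial^T \hat e_\partial$ where $\hat f_\partial, \hat e_\partial$ are built from $\tilde e(a),\tilde e(b)$ via the boundary formula. The orthogonality condition therefore reduces to
\[
(\hat f_\partial - \tilde f_\partial)^T e_\partial + f_\partial^T (\hat e_\partial - \tilde e_\partial) = 0
\]
for every admissible $(f_\partial, e_\partial)$. Since the block matrix $\tfrac{1}{\sqrt 2}\bigl[\begin{smallmatrix}P_1 & -P_1\\ I & I\end{smallmatrix}\bigr]$ is invertible (its determinant is $\pm\det(2P_1)$, nonzero by invertibility of $P_1$) and $e(a),e(b)$ can be prescribed independently in $\mathbb{R}^n$, the pair $(f_\partial,e_\partial)$ sweeps out all of $\mathbb{R}^{2n}$. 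This forces $\tilde f_\partial = \hat f_\partial$ and $\tilde e_\partial = \hat e_\partial$, so $(\tilde f,\tilde f_\partial,\tilde e,\tilde e_\partial)\in\mathcal{D}$.

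I expect the main obstacle to be the $\mathcal{D}^\perp\subseteq\mathcal{D}$ step: specifically, the weak-derivative argument establishing $\tilde e\in H^1$ requires care with distributional signs and with the fact that the duality pairing on the effort/flow space already carries a minus sign in its boundary component. Once the interior structure is in place, the boundary argument is essentially linear algebra driven by invertibility of $P_1$.
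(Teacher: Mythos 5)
Your proposal is correct, and it actually proves more than the paper does. The paper's proof establishes only the inclusion $\mathcal{D}\subseteq\mathcal{D}^{\perp}$ (the statement that the power of every element of $\mathcal{D}$ vanishes) and explicitly defers the converse inclusion --- the maximality of $\mathcal{D}$ --- to the reference \cite{GoZM05}. For the inclusion that the paper does prove, your computation is identical in substance: the $P_0$ contributions cancel by skew-symmetry, the $P_1$ contributions integrate to the boundary term $\left[e_1^{T}P_1 e_2\right]_a^b$, and expanding the boundary pairing with the $\frac{1}{\sqrt{2}}\bigl[\begin{smallmatrix}P_1 & -P_1\\ I & I\end{smallmatrix}\bigr]$ formula reproduces exactly that boundary term. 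Your argument for $\mathcal{D}^{\perp}\subseteq\mathcal{D}$ is the genuinely new content relative to the paper, and it is sound: testing against compactly supported efforts identifies the distributional derivative of $P_1\tilde e$ with $\tilde f - P_0\tilde e\in L^2$, which together with the invertibility of $P_1$ gives $\tilde e\in H^1$ and the interior relation; then, since boundary values of $H^1$ efforts can be prescribed freely and the boundary map is a bijection of $\mathbb{R}^{2n}$, the pair $(f_\partial,e_\partial)$ ranges over all of $\mathbb{R}^{2n}$ and forces $\tilde f_\partial,\tilde e_\partial$ to agree with the values built from $\tilde e(a),\tilde e(b)$. The only point to state explicitly when writing this up is that constant-coefficient invertibility of $P_1$ is what lets you pass from $P_1\tilde e\in H^1$ to $\tilde e\in H^1$, and that you need test elements (e.g.\ affine ones) realizing arbitrary boundary values; both are immediate. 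In short, where the paper gives half the proof and a citation, you have supplied a complete and correct self-contained argument.
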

\begin{proof}
  We shall not provide the full proof here, for that we refer to \cite{GoZM05}, but we will show that the power is zero, ie., ${\mathcal D} \subseteq {\mathcal D}^{\perp}$.
  
  Let $b=(f_1,f_{1,\partial}, e_1, e_{1,\partial})$ be an element of ${\mathcal D}$. To show that it is an element of ${\mathcal D}^{\perp}$, we have to show that $\langle b, v\rangle_+ =0$ for all $v = (f_2,f_{2,\partial}, e_2, e_{2,\partial}) \in {\mathcal D}$. Using (\ref{eq:1.3.11}) and (\ref{eq:1HZ}) we get
  \[
    \langle b, v \rangle_+ = \int_a^b f_1(\zeta)^T e_2(\zeta) d\zeta - f_{1,\partial}^T e_{2,\partial} + \int_a^b f_2(\zeta)^T e_1(\zeta) d\zeta - f_{2,\partial}^T e_{1\partial}.
\]
Applying (\ref{eq:2HZ}) this becomes
  \begin{align*}
     \langle b, v \rangle_+ =&\  \int_a^b \left[ P_1 \frac{de_1}{d\zeta} (\zeta)+ P_0 e_1(\zeta)\right] ^T e_2(\zeta) d\zeta -  f_{1,\partial}^T e_{2,\partial} \\
     &+\  \int_a^b \left[ P_1 \frac{de_2}{d\zeta} (\zeta)+ P_0 e_2(\zeta)\right] ^T e_1(\zeta) d\zeta -  f_{2,\partial}^T e_{1,\partial}\\
     =&\ \int_a^b  \frac{de_1}{d\zeta} (\zeta)^T P_1 e_2(\zeta)  + \frac{de_2}{d\zeta} (\zeta)^T P_1 e_1(\zeta) d\zeta -   f_{1,\partial}^T e_{2,\partial} - f_{2,\partial}^T e_{1,\partial},
  \end{align*}
  where we have used the symmetry of $P_1$ and the anti-symmetry of $P_0$. The integral term equals
  \[
    \left[e_1(\zeta)^T P_1 e_2(\zeta) \right]_{a}^b.
  \]
  Summarising we have
  \begin{align*}
    \langle b, v \rangle_+ =&\  \left[e_1(\zeta)^T P_1 e_2(\zeta) \right]_{a}^b - \frac{1}{2} \left[ P_1(e_1(b)-e_1(a)) \right]^T \left[e_2(b) + e_2(a) \right]  \\
    &\  -\frac{1}{2}\left[ P_1(e_2(b)-e_2(a)) \right]^T \left[e_1(b) + e_1(a) \right]\\
    =&\ e_1(b)^T P_1 e_2(b) -  e_1(a)^T P_1 e_2(a)  \\
    &\ -e_1(b)^T P_1 e_2(b) + e_1(a)^T P_1 e_2(a) =0,
  \end{align*}
  which proves our assertion.\hfill$\Box$
\end{proof}

The link between the Dirac structure (\ref{eq:2HZ}) and the port-Hamiltonian system is given by the following substitution:
\begin{equation}
\label{eq:3HZ}
  f = \dot{x}\, \mbox{ and }\, e = {\mathcal H} x.
\end{equation}
With this choice, we get
\begin{align*}
\nonumber
  \frac{d}{dt} \frac{1}{2} \int_a^b x(\zeta, t)^T {\mathcal H} x(\zeta,t) d\zeta =&\ \frac{1}{2} \int_a^b \dot{x}(\zeta,t)^T {\mathcal H} x(\zeta,t) + x(\zeta,t)^T {\mathcal H} \dot{x}(\zeta,t) d\zeta\\
  \label{eq:4HZ}
   =&\  \frac{1}{2}\int_a^b f^T e + e^T f d\zeta = f_{\partial}^Te_{\partial},
\end{align*}
where we used the Dirac structure (\ref{eq:2HZ}) with its duality/power product (\ref{eq:1HZ}). The power given by $f_{\partial}$ and $e_{\partial}$, can be expressed in $e$ via (\ref{eq:2HZ}), and via (\ref{eq:3HZ}) into ${\mathcal H}x$.

We want to remark that there is no time in the Dirac structure, the time enters via the choice of $e$ and $f$, see (\ref{eq:3HZ}). So the same Dirac structure could support a discrete time system.  

In our figure of a Dirac structure, Figure \ref{fig:1.2}, we have already shown the variables, $f_{\partial}$ and $e_{\partial}$ as pointing in and out of the Dirac structure. This is because Dirac structure can be connected. 
\begin{figure}[ht]
  \centering
   \includegraphics[scale=1.0]{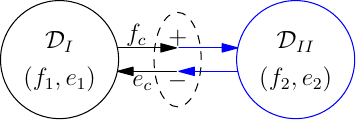}
  \caption{Coupling of two Dirac structures}
  \label{fig:1.3}
\end{figure}

Given two Dirac structures, we define the coupling of them  by (see also Figure \ref{fig:1.3})
\begin{equation}
  \label{eq:1.3.25}
  {\mathcal D} = \left\{ \left(\begin{array}{c} f_1 \\ f_2 \\ e_1 \\ e_2 \end{array} \right) \mid \exists  f_c, e_c \mbox{ s.t. }
   \left(\begin{array}{c} f_1 \\ f_c \\ e_1 \\ e_c \end{array} \right) \in {\mathcal D}_I \mbox{ and }
  \left(\begin{array}{c} f_2 \\ f_c \\ e_2 \\ -e_c \end{array} \right) \in {\mathcal D}_{II} \right\}.
\end{equation}
 This structure has zero power. To see this, we assume that for both systems $\langle f, e\rangle + \langle e, f \rangle$ denotes the power. We take as power for ${\mathcal D}$
\[
  \langle f_1, e_1 \rangle +  \langle e_1, f_1 \rangle +  \langle f_2, e_2 \rangle  + \langle e_2, f_2 \rangle. 
\]
We calculate for this power product
\begin{align*}
  \langle f_1, e_1 \rangle +   \langle e_1, f_1 \rangle +&  \langle f_2, e_2 \rangle  + \langle e_2, f_2 \rangle \\
  =&~ 
  \langle f_1, e_1 \rangle +  \langle e_1, f_1 \rangle + \langle f_c, e_c \rangle +  \langle e_c, f_c \rangle \\
  &~
  -  \langle f_c, e_c \rangle +  \langle e_c, f_c \rangle  + \langle f_2, e_2 \rangle -  \langle e_2, f_2 \rangle \\
  =&~ \left\langle \left( \begin{matrix} f_1 \\ f_c \end{matrix} \right), \left( \begin{matrix} e_1 \\ e_c \end{matrix} \right) \right\rangle + \left\langle \left( \begin{matrix} e_1 \\ e_c \end{matrix} \right), \left( \begin{matrix} f_1 \\ f_c \end{matrix} \right) \right\rangle + \\
  &~ \left\langle \left( \begin{matrix} f_2 \\ f_c \end{matrix} \right), \left( \begin{matrix}  e_2 \\ - e_c \end{matrix} \right) \right\rangle + \left\langle \left( \begin{matrix} e_2 \\-e_c \end{matrix} \right), \left( \begin{matrix} f_2\\ f_c \end{matrix} \right) \right\rangle.
\end{align*}
The last expressions are zero since $\left(\begin{smallmatrix} f_1 \\
    f_c \\ e_1 \\ e_c \end{smallmatrix} \right) \in {\mathcal D}_{I}$
and $\left(\begin{smallmatrix} f_2 \\ f_c \\ 
    e_2 \\ - e_c \end{smallmatrix} \right) \in {\mathcal D}_{II}$, respectively.
Thus we see that the total power of the interconnected Dirac structure
is zero. This is not sufficient to show that ${\mathcal D}$
defined by (\ref{eq:1.3.25}) is a Dirac structure. However, it only remains to show that ${\mathcal D}$ is
maximal. For many coupled Dirac structures this holds.  If the systems
has the Hamiltonian $H_1$ and $H_2$ respectively, then the Hamiltonian
of the coupled system is $H_1 + H_2$. Of course we can extend this to
the coupling of more than two systems.

\section{Well-posedness}

In this section we investigate existence and uniqueness of solutions of the port-Hamiltonian system \eqref{eq:1.2.8}.
We define the boundary flow and effort as, see (\ref{eq:2HZ}),
$$  \begin{pmatrix}  f_\partial (t) \\ e_\partial(t) \end{pmatrix}=  \frac{1}{\sqrt{2}} \begin{bmatrix}
        P_1 & -P_1 \\ 
        I & I \end{bmatrix} \begin{pmatrix} {\mathcal
        H}(b) x(b,t) \\ {\mathcal
        H}(a) x(a,t) \end{pmatrix}.$$
Since $P_1$ is invertible, so is the matrix $\left[\begin{smallmatrix}
        P_1 & -P_1 \\ 
        I & I \end{smallmatrix}\right]$. Hence we can equivalently express the boundary conditions in terms of boundary effort and boundary flow. Thus we consider the system 
\begin{align}\nonumber
  \frac{\partial x}{\partial t}(\zeta,t) &= P_1 \frac{\partial }{\partial \zeta} \left({\mathcal H}(\zeta) x(\zeta,t) \right) + P_0 \left( {\mathcal H}(\zeta) x(\zeta,t) \right), \quad \zeta\in (a,b), t>0,\\
x(\zeta,0) &= x_0(\zeta), \qquad \zeta\in (a,b), \nonumber\\
 u(t) &=W_{B,1} \begin{pmatrix}  f_\partial (t) \\ e_\partial(t) \end{pmatrix},\qquad t>0,\label{eqn:phs}\\
 0 &=W_{B,2} \begin{pmatrix}  f_\partial (t) \\ e_\partial(t) \end{pmatrix},\qquad t>0,\nonumber\\
        y(t) &=W_{C} \begin{pmatrix}  f_\partial (t) \\ e_\partial(t) \end{pmatrix},\qquad t>0.\nonumber
\end{align}
The assumption on the matrices $P_0$, $P_1$ and ${\mathcal H}(\zeta)$ are as in Section~\ref{SEC:2.4}.
Furthermore,  $W_{B,1}$ is a $m\times 2n$-matrix, $W_{B,2}$ is a $(n-m)\times 2n$-matrix, 
 and $ W_C$ a $m\times 2n$-matrix. 
 Here
$u(t)\in \mathbb R^m$ denotes the input  and  $y(t)\in \mathbb R^m$ the output  at time $t$. 
Let  
$$W_B:=\left[\begin{smallmatrix}
        W_{B,1} \\ W_{B,2}   \end{smallmatrix}\right].$$
In the following we assume that the matrix $\left[\begin{smallmatrix}
        W_B \\ W_C \end{smallmatrix}\right]$ has full row rank. 
The Hamiltonian (energy)  of the port-Hamiltonian system is given by
\begin{equation*}
H(x(\cdot,t)) =
\frac{1}{2}\int_a^b x(\zeta,t)^\top{\mathcal
H}(\zeta)x(\zeta,t)\,d\zeta,
\end{equation*}
and an easy calculation shows that, see also \eqref{eq:1.2.10},
\begin{equation*}
\frac{d}{dt}H(x(\cdot,t)) =  \frac{1}{2} \left[  \left[{\mathcal H}
  x\right]^\top (\zeta,t) P_1  \left[{\mathcal H} x\right](\zeta,t) \right]_a^b
= e_\partial(t)^\top f_\partial(t).
\end{equation*}
A general assumption of a port-Hamiltonian systems is that they are \textit{impedance passive}, that is, 
\begin{equation*}
\frac{d}{dt}H(x(\cdot,t)) \le  u(t)^\top y(t).
\end{equation*}
If $m=n$, then this is satisfied if and only if 
$$\begin{bmatrix}  W_B\Sigma W_B^\top &  W_B\Sigma W_C^\top \\  W_C\Sigma W_B^\top &  W_C\Sigma W_C^\top \end{bmatrix}^{-1}\le \begin{bmatrix}  0 & I \\ I & 0 \end{bmatrix},$$
 where $\Sigma=\left[\begin{smallmatrix}  0 & I \\ I & 0 \end{smallmatrix}\right]$.

As \textit{state space} we choose  $X:=L^2((a,b);\mathbb{R}^n)$ equipped with the (energy) inner product
$
\langle x,y\rangle_X := \frac{1}{2}\int_a^b x(\zeta)^\top{\mathcal
H}(\zeta)y(\zeta)\,d\zeta$. The induced norm
 is equivalent to the standard $L^2$-norm.

First, we investigate solvability of the homogeneous port-Hamiltonian equation, that is, we assume $u=0$ and do not consider the output function $y$.

We define the system operator $A:{\mathcal D}(A)\subset X\to X$  by 
\begin{equation}\label{operatorA}
 Ax:= ( P_1 \frac{d}{d\zeta} + P_0) ({\mathcal H}x), \qquad x\in {\mathcal D}(A), 
\end{equation}
\begin{equation}
\label{domainA}
{\mathcal D}(A) := \left\{ x\in X\mid  {\mathcal H}x\in H^{1}((a,b);\mathbb R^n) \text{ and } W_B\left[\begin{smallmatrix} f_\partial\\e_\partial  \end{smallmatrix}\right]=0 \right\}.
\end{equation}
Identifying $x(\zeta,t)=x(t)(\zeta)$,  we  can rewrite our port-Hamiltonian system \eqref{eqn:phs} with $u=0$ and without output $y$ as
\begin{align}\label{eqn:cauchy}
\dot{x}(t)=Ax(t), \quad t\ge 0, \qquad x(0)=x_0.
\end{align}

We call a function $x:[0,\infty)\rightarrow X$ a \emph{classical solution of \eqref{eqn:cauchy}}, if $x$ is continuously differentiable, $x(t)\in D(A)$ for all $t\ge 0$ and \eqref{eqn:cauchy} is satified. Further, we call a function $x:[0,\infty)\rightarrow X$ a \emph{weak solution of \eqref{eqn:cauchy}}, if $x$ is continuously and
for every continuously differentiable functions $g:[0,\infty)\rightarrow X$ with $g(t)\in D(A^*)$ for all $t\ge 0$ and $Ag\in L^1([0,\tau];X)$ for every $\tau>0$ we have
\begin{align*}
\int_0^\tau \langle \dot{g}(t)+A^*g(t),x(t)\rangle dt =\langle g(\tau),x(\tau)\rangle-\langle g(0),x_0\rangle,\quad \tau >0.
\end{align*}
Clearly, every classical solution is a weak solution.
Our first main result concerning existence and uniqueness of solution is as follows.

\begin{theorem}[\cite{ZGMV10}]\label{theo1}
The port-Hamitionan system 
\begin{align*}\nonumber
  \frac{\partial x}{\partial t}(\zeta,t) &= P_1 \frac{\partial }{\partial \zeta} \left({\mathcal H}(\zeta) x(\zeta,t) \right) + P_0 \left( {\mathcal H}(\zeta) x(\zeta,t) \right), \quad \zeta\in (a,b), t>0,\\
x(\zeta,0) &= x_0(\zeta), \qquad \zeta\in (a,b), \nonumber\\
 0 &=W_{B} \begin{pmatrix}  f_\partial (t) \\ e_\partial(t) \end{pmatrix},\qquad t>0,
 \end{align*}
has for every initial condition $x_0\in X$ a unique weak solution with non increasing Hamiltonian if and only if
$$  W_B\Sigma W_B^\top\ge 0.$$
\end{theorem}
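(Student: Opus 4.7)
My strategy is to translate the statement into a semigroup-generation question for the operator $A$ of \eqref{operatorA}--\eqref{domainA} on the energy Hilbert space $(X,\langle\cdot,\cdot\rangle_X)$, and then apply the Lumer--Phillips theorem. First I would argue that existence and uniqueness of a weak solution with non-increasing Hamiltonian for every $x_0\in X$ is equivalent to $A$ generating a $C_0$-semigroup of contractions on $X$: the forward direction is immediate from $x(t) = T(t)x_0$, while the reverse uses Ball's theorem on weak solutions to produce $C_0$-generation, and the non-increase of $\|\cdot\|_X^2$ to upgrade it to contractivity. By Lumer--Phillips the problem then reduces to (a) dissipativity of $A$ and (b) the range condition $\operatorname{ran}(\lambda_0 I - A) = X$ for some $\lambda_0 > 0$.

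For dissipativity, the balance identity \eqref{eq:1.2.10} yields
\[
2\,\langle Ax,x\rangle_X \;=\; e_\partial^\top f_\partial \;=\; \tfrac12\,\begin{pmatrix} f_\partial\\ e_\partial\end{pmatrix}^{\!\top}\Sigma\begin{pmatrix} f_\partial\\ e_\partial\end{pmatrix}
\]
for every $x\in D(A)$, so $A$ is dissipative iff $v^\top \Sigma v \le 0$ on $\ker W_B\subset\mathbb{R}^{2n}$. The finite-dimensional lemma I need is that, under the full-row-rank hypothesis on $W_B$, this is equivalent to $W_B\Sigma W_B^\top\ge 0$; it follows from the splitting $\mathbb{R}^{2n} = \operatorname{ran} W_B^\top \oplus \ker W_B$ together with the fact that $\Sigma$ has signature $(n,n)$, so that an $n$-dimensional subspace on which $\Sigma$ is non-positive is maximally so and is complemented by an $n$-dimensional subspace on which $\Sigma$ is non-negative --- which is precisely what $W_B\Sigma W_B^\top\ge 0$ expresses about $\operatorname{ran} W_B^\top$.

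The main technical step is the range condition $\operatorname{ran}(\lambda I - A) = X$. Given $z\in X$ and $\lambda > 0$, substituting $u = \mathcal{H} x$ turns $(\lambda I - A)x = z$ into the first-order linear ODE $u' = P_1^{-1}(\lambda\,\mathcal{H}^{-1}u - P_0 u - z)$ on $[a,b]$; its solutions form an $n$-parameter family indexed by $u(a)$, and the boundary constraint $W_B(f_\partial, e_\partial)^\top = 0$ becomes an $n\times n$ linear system in $u(a)$ whose coefficient matrix, via the flow of the ODE, is a perturbation (of order $1/\lambda$) of a constant-coefficient matrix built from $W_B$, $P_1$ and $\Sigma$. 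Coupling this with the dissipativity estimate --- which already gives injectivity with closed range for $\lambda$ in a right half-plane --- and choosing $\lambda$ large enough to make the $n\times n$ system non-singular yields surjectivity. I expect this combined ODE/boundary-algebra argument to be the main obstacle; the converse implication of the theorem then falls out by reversing the dissipativity step. For the detailed execution of both the linear-algebra lemma and the resolvent computation I would follow \cite{GoZM05,ZGMV10}.
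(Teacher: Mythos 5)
First, a point of reference: the paper itself gives no proof of Theorem~\ref{theo1}; it is quoted from \cite{ZGMV10}. Measured against the proof in that source and in \cite[Ch.~7]{JaZw12}, your architecture is the standard one and is correct: reduce the solvability statement to generation of a contraction semigroup by $A$, use the balance identity to get $2\langle Ax,x\rangle_X=e_\partial^\top f_\partial=\tfrac12 v^\top\Sigma v$ with $v=\left(\begin{smallmatrix}f_\partial\\ e_\partial\end{smallmatrix}\right)$ ranging over $\ker W_B$, convert ``$\Sigma$ non-positive on $\ker W_B$'' into $W_B\Sigma W_B^\top\ge 0$ by a finite-dimensional signature lemma, and settle the range condition by integrating the resolvent equation as an ODE in $u=\mathcal{H}x$. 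One small correction to the lemma: the complement relevant to the signature argument is the $\Sigma$-orthogonal one, $(\ker W_B)^{\perp_\Sigma}=\Sigma\ran W_B^\top$, not the Euclidean $\ran W_B^\top$; since $\Sigma^2=I$ both subspaces carry the same $\Sigma$-signs, so your conclusion stands, but the clean statement is: an $n$-dimensional $\Sigma$-non-positive subspace of a signature-$(n,n)$ space is maximally non-positive, hence its $\Sigma$-orthogonal complement is non-negative, and that is exactly $W_B\Sigma W_B^\top\ge0$.

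The one step that would not survive as written is the surjectivity argument. The transition matrix of $u'=P_1^{-1}(\lambda\mathcal{H}^{-1}-P_0)u$ over $[a,b]$ has entries behaving like $e^{\pm c\lambda}$, so the $n\times n$ boundary matrix $M(\lambda)$ is \emph{not} an $O(1/\lambda)$ perturbation of a constant matrix, and ``choose $\lambda$ large'' does not close the gap. Fortunately you do not need it: dissipativity gives $\|(\lambda-A)x\|\ge\lambda\|x\|_X$, hence $\ker(\lambda-A)=\{0\}$ for every $\lambda>0$, and a nonzero vector in $\ker M(\lambda)$ is precisely the initial value of a nonzero homogeneous solution satisfying the boundary conditions, i.e.\ an eigenvector of $A$; so $M(\lambda)$ is automatically invertible for every $\lambda>0$, and variation of constants then produces a preimage of any $z\in X$. (Alternatively, one shows $A^*$ is also dissipative, as in \cite{GoZM05}, and invokes the corresponding corollary of Lumer--Phillips.) With that repair, and with Ball's theorem handling the equivalence between weak solvability and semigroup generation in both directions, your proof is complete and coincides in substance with the cited one.
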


Next, we study the solvability of the  port-Hamiltonian system \eqref{eqn:phs}.
We say that the port-Hamiltonian system \eqref{eqn:phs} is \emph{well-posed}, if there are  $t_0, m_{t_0}>0$ such that  every classical solution of \eqref{eqn:phs} satisfies
$$\|x(t_0)\|^2_X +\int_0^{t_0} \|y(t)\|^2dt \le m_{t_0}\left[  \|x(0)\|^2_X +\int_0^{t_0} \|u(t)\|^2dt \right].$$
If the port-Hamiltonian system \eqref{eqn:phs} is \emph{well-posed}, then for every initial condition $x_0\in X$ and input function $u\in L^2_{loc}([0,\infty),\mathbb R^m)$ the system has an unique (weak) solution $x\in C([0,\infty),X)$ and $y\in L^2_{loc}([0,\infty),\mathbb R^m)$.

We remark, that for almost every $\zeta\in[a,b]$ the matrix $P_1\mathcal{H}(\zeta)$ can be diagonalized as $P_1\mathcal{H}(\zeta)=S^{-1}(\zeta)\Delta(\zeta)S(\zeta)$, where 
$\Delta(\zeta)$ is a diagonal matrix and  $S(\zeta)$ is an invertible matrix  for a.e.~$\zeta\in [a,b]$.

\begin{theorem}[\cite{ZGMV10}]\label{theo2}
If   $S^{-1}$, $S$,  $\Delta: [a,b] \rightarrow \mathbb R^{n\times 
n}$ are continuously differentiable and $W_B\Sigma W_B^\top\ge 0$, then 
the port-Hamiltonian system \eqref{eqn:phs} is well-posed.
\end{theorem}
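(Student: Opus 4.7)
The plan is to combine Theorem~\ref{theo1} with a diagonalization of $P_1\mathcal{H}(\zeta)$ that reduces \eqref{eqn:phs} to a decoupled first-order hyperbolic system on $(a,b)$, for which the required admissibility estimates follow from the classical sideways-energy identity.

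First, I invoke Theorem~\ref{theo1}: since $W_B\Sigma W_B^\top\ge 0$, the operator $A$ in \eqref{operatorA}--\eqref{domainA} generates a contraction $C_0$-semigroup on $(X,\langle\cdot,\cdot\rangle_X)$, so the homogeneous equation is well-posed. To upgrade this to well-posedness of the full system \eqref{eqn:phs} in the sense of Salamon--Weiss it suffices, for classical solutions, to establish the two admissibility bounds
\begin{align*}
\|x(t_0)\|_X^2 &\le m\bigl(\|x_0\|_X^2+{\textstyle\int_0^{t_0}}\|u(t)\|^2\,dt\bigr),\\
{\textstyle\int_0^{t_0}}\|y(t)\|^2\,dt &\le m\bigl(\|x_0\|_X^2+{\textstyle\int_0^{t_0}}\|u(t)\|^2\,dt\bigr),
\end{align*}
since these together are the defining inequality of well-posedness, and a density/continuity argument then extends the estimate to all weak solutions.

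Next I perform the change of variables $z(\zeta,t):=S(\zeta)\mathcal{H}(\zeta)x(\zeta,t)$. The $C^1$ hypothesis on $S$, $S^{-1}$, $\Delta$ together with coercivity of $\mathcal{H}$ makes $x\mapsto z$ a bounded isomorphism from $X$ onto $L^2((a,b);\mathbb{R}^n)$ that carries $D(A)$ into an $H^1$-subspace, and a direct chain-rule calculation turns \eqref{eqn:phs} into
\begin{equation*}
\frac{\partial z}{\partial t}(\zeta,t)=\Delta(\zeta)\frac{\partial z}{\partial\zeta}(\zeta,t)+\Pi(\zeta)\,z(\zeta,t),
\end{equation*}
where $\Pi\in C([a,b];\mathbb{R}^{n\times n})$ absorbs all zero-order contributions from $P_0$, $\partial_\zeta S$ and $\partial_\zeta\mathcal{H}$. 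In these variables the sign of the diagonal entries of $\Delta(\zeta)$ separates the components of $z$ into those propagating from $a$ to $b$ and those from $b$ to $a$, and the boundary flows/efforts $f_\partial, e_\partial$ become invertible linear images of the traces $z(a,t), z(b,t)$.

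The reduced system is then attacked by the standard sideways-energy identity: multiplying each scalar equation for $z_i$ by $z_i$ and integrating over $(a,b)\times(0,t_0)$ yields, after using the divergence theorem, an identity that controls $\int_0^{t_0}(|z_i(a,t)|^2+|z_i(b,t)|^2)\,dt$ in terms of $\|z(\cdot,0)\|_{L^2}^2+\|z(\cdot,t_0)\|_{L^2}^2$, and vice versa; the zero-order term $\Pi z$ is a bounded perturbation on the state space and is absorbed by a Gronwall argument. Because $\bigl[\begin{smallmatrix}P_1 & -P_1\\ I & I\end{smallmatrix}\bigr]$ is invertible and $\bigl[\begin{smallmatrix}W_B\\ W_C\end{smallmatrix}\bigr]$ has full row rank, the three boundary relations $u=W_{B,1}[\cdot]$, $0=W_{B,2}[\cdot]$, $y=W_C[\cdot]$ translate into an affine system on the traces of $z$ in which exactly the outgoing characteristic components are prescribed in terms of $u$ and the incoming ones; this closes the loop and converts the trace estimates into the two admissibility bounds above.

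The main obstacle, and the place where the $C^1$ regularity of $S$, $S^{-1}$, $\Delta$ is essential, is checking that the change of variables indeed maps $D(A)$ onto the correct $H^1$-subspace with Lipschitz coefficients and, more subtly, that the condition $W_B\Sigma W_B^\top\ge 0$ together with the rank assumption on $[W_B^\top\ W_C^\top]^\top$ guarantees that the reorganized boundary conditions pin down exactly the outgoing characteristic traces at each endpoint---so that the sideways-energy estimate is delivered in the direction needed (outgoing traces bounded by incoming traces plus interior data) rather than in the wrong direction. Once that structural fact is verified, the rest of the argument is the standard transport-theoretic well-posedness, and the contraction semigroup supplied by Theorem~\ref{theo1} furnishes the state bound automatically.
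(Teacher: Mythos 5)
The paper itself contains no proof of Theorem~\ref{theo2}; it is quoted from \cite{ZGMV10}, and your outline does follow the strategy of that reference (diagonalise $P_1\mathcal{H}$, reduce to a system of transport equations, estimate the boundary traces along characteristics, and treat the actual boundary conditions as a static feedback around the diagonal system). The route is therefore the right one, but two steps do not go through as written. The first is concrete: with the convention $P_1\mathcal{H}(\zeta)=S^{-1}(\zeta)\Delta(\zeta)S(\zeta)$ used here, the substitution that produces the principal part $\Delta\,\partial_\zeta z$ is $z=Sx$, not $z=S\mathcal{H}x$. Putting $z=S\mathcal{H}x$ yields the principal coefficient $S\mathcal{H}P_1S^{-1}=(SS^\top)\Delta(SS^\top)^{-1}$, which is diagonal only if $SS^\top$ commutes with $\Delta$; so the decoupling into characteristic components, on which everything afterwards rests, fails as stated. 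This is repairable, but it must be repaired.

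The more serious issue is that the decisive step is asserted rather than proved. You claim that the boundary relations ``prescribe exactly the outgoing characteristic components in terms of $u$ and the incoming ones,'' you identify verifying this as the main obstacle, and you then conclude with ``once that structural fact is verified.'' That fact is not automatic: the rows of $W_B$ mix traces at $a$ and at $b$, and what has to be shown is that a certain $n\times n$ matrix built from $W_B$, $\left[\begin{smallmatrix}P_1&-P_1\\ I&I\end{smallmatrix}\right]$ and the spectral projections of $P_1\mathcal{H}$ at the two endpoints is invertible, so that the reflection/feedback loop along the characteristics can be solved iteratively on time intervals shorter than the minimal crossing time. In \cite{ZGMV10} this invertibility is deduced from the generation of a $C_0$-semigroup, which is where the hypothesis $W_B\Sigma W_B^\top\ge 0$ actually enters (via Theorem~\ref{theo1}); without that argument the sideways energy identity only bounds outgoing traces by incoming ones plus interior data and the loop cannot be closed. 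A smaller omission of the same kind: preservation of well-posedness under the zero-order perturbation $\Pi z$ concerns the input and output maps as well as the semigroup, and is itself a theorem for well-posed linear systems rather than a Gronwall absorption on the state. As it stands, the proposal is an accurate roadmap of the known proof with its hardest milestone left unvisited.
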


\begin{example}[Lossless transmission line] 
We continue our Example \ref{ex1} of the lossless transmission line. Boundary effort and boundary flow are given by
\begin{align*}
\begin{pmatrix}   f_\partial \\ e_\partial  \end{pmatrix} =\frac{1}{\sqrt{2}}\begin{pmatrix}- \frac{x_2}{L}(b)+\frac{x_2}{L}(a) \\ - \frac{x_1}{C}(b)+\frac{x_1}{C}(a)\\  \frac{x_1}{C}(b)+\frac{x_1}{C}(a)\\  \frac{x_2}{L}(b)+\frac{x_2}{L}(a)\end{pmatrix}
\end{align*}
and we calculate
\begin{align*}
W_B =\frac{1}{\sqrt{2}}\begin{bmatrix}  0 &1&1&0\\ -R &-1&1&R\end{bmatrix}.
\end{align*}
An easy calculation shows
$$ W_B \Sigma W_B^\top=  \begin{bmatrix}  0 &0\\ 0 &2R\end{bmatrix} \ge 0.$$
Thus, by Theorem \ref{theo1} the lossless transmission line has for every initial condition $x_0\in X$ and $u=0$ a unique weak solution $x$ with non increasing Hamiltonian.
Further, we obtain
\begin{align*}
P_1 {\mathcal H} = S^{-1}\Delta S = \begin{bmatrix}  \gamma & -\gamma\\ \frac{1}{C} &\frac{1}{C}\end{bmatrix}\begin{bmatrix}  -\gamma &0\\ 0 &\gamma \end{bmatrix}\begin{bmatrix}  \frac{1}{2\gamma}&\frac{C}{2}\\ -\frac{1}{2\gamma} &\frac{C}{2}\end{bmatrix},
\end{align*}
where $\gamma$ is positive with $\gamma^2=\frac{1}{CL}$. Assuming that the functions $C$ and $L$ are positive and continuously differentiable, Theorem \ref{theo2} implies that the lossless transmission line  is well-posed. Further, inequality \eqref{eq:1.3} shows that the lossless transmission line is impedance passive.
\end{example}

\section{Stability}

This section is devoted to stability of linear (homogeneous) port-Hamiltonian systems
\begin{align}\nonumber
  \frac{\partial x}{\partial t}(\zeta,t) &= P_1 \frac{\partial }{\partial \zeta} \left({\mathcal H}(\zeta) x(\zeta,t) \right) + P_0 \left( {\mathcal H}(\zeta) x(\zeta,t) \right), \quad \zeta\in (a,b), t>0,\\
x(\zeta,0) &= x_0(\zeta), \qquad \zeta\in (a,b), \label{eqn:phshom}\\
 0 &=W_{B} \begin{pmatrix}  f_\partial (t) \\ e_\partial(t) \end{pmatrix},\qquad t>0.\nonumber
 \end{align}
The assumption on $P_0, P_1, {\mathcal H}, W_B$ are as in the previous section.
We study the question whether the solution of \eqref{eqn:phshom} tends to zero as time tends to infinity. We further assume that  $W_B\Sigma W_B^\top\ge 0$, which implies that for every $x_0\in X$ the port-Hamiltonian system \eqref{eqn:phshom} has a unique weak solution.

\begin{definition}
The port-Hamiltonian system \eqref{eqn:phshom}
 is {\em exponentially stable} if there exist 
constants $M>0$ and $\omega >0$ such that for every $x_0\in X$ the corresponding weak solution $x$ satisfies
  \begin{equation*}%
    \| x(t) \| \leq Me^{-\omega t}\|x_0\| \qquad \mbox{for } t \geq 0.
  \end{equation*}
\end{definition}
\begin{theorem}[{\!\!\cite{VZGM09}, \cite[Theorem 9.1.3]{JaZw12}}] \label{T:9.2.3}
   If for some positive constant $k$
  one of the following conditions is satisfied 
  \begin{itemize}
  \item for all $x \in X$ with  ${\mathcal H}x\in H^{1}((a,b);\mathbb R^n)$  and  $W_B\left[\begin{smallmatrix} f_\partial\\e_\partial  \end{smallmatrix}\right]=0$ we have 
  \[
     \left({\mathcal H} x\right)^\top (b) P_1 \left({\mathcal H} x\right)(b) - \left({\mathcal H} x\right)^\top (a) P_1 \left({\mathcal H} x\right)(a)  \leq -k
   \|{\mathcal H}(b)x(b)\|^2, \mbox{ or }
  \]
  \item for all $x \in X$ with  ${\mathcal H}x\in H^{1}((a,b);\mathbb R^n)$  and  $W_B\left[\begin{smallmatrix} f_\partial\\e_\partial  \end{smallmatrix}\right]=0$ we have  
  \[
      \left({\mathcal H} x\right)^\top (b) P_1 \left({\mathcal H} x\right)(b) - \left({\mathcal H} x\right)^\top (a) P_1 \left({\mathcal H} x\right)(a) \leq -k
   \|{\mathcal H}(a)x(a)\|^2,
   \]
  \end{itemize}
  then the port-Hamiltonian system \eqref{eqn:phshom}
 is exponentially stable.
\end{theorem}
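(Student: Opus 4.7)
My plan is to reduce exponential stability to a boundary observability inequality and then to establish that inequality with a multiplier argument that exploits the finite speed of propagation. Since the hypothesis implies $\tfrac{d}{dt}H(x(\cdot,t))\le 0$, we have $W_B\Sigma W_B^\top\ge 0$ and Theorem~\ref{theo1} gives a contraction $C_0$-semigroup $T(t)$ on $X$. By the semigroup law it suffices to find $\tau>0$ and $\alpha\in(0,1)$ with $\|T(\tau)x_0\|_X^2\le\alpha\|x_0\|_X^2$ for every $x_0$. Setting $\xi=\mathcal{H}x$ and integrating the power balance \eqref{eq:1.2.10} against case~1 of the hypothesis gives, for classical solutions,
\[
\|x(\tau)\|_X^2 \;\le\; \|x_0\|_X^2 - k\int_0^\tau \|\xi(b,t)\|^2\,dt,
\]
so exponential decay will follow from an observability inequality
\[
\int_0^\tau \|\xi(b,t)\|^2\,dt \;\ge\; \delta\,\|x_0\|_X^2 \qquad (x_0\in D(A)),
\]
which then extends to all of $X$ by density.

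To prove the observability inequality I would use the multiplier $2(\zeta-a)\xi$, which vanishes at the non-dissipative end $\zeta=a$ and has maximum weight at the dissipative end $\zeta=b$. Pairing it with $\dot x=P_1\partial_\zeta\xi+P_0\xi$ and integrating over $[a,b]\times[0,\tau]$: the skew-adjointness of $P_0$ kills the zeroth-order term; the identity $\xi^\top P_1\partial_\zeta\xi=\tfrac12\partial_\zeta(\xi^\top P_1\xi)$ with integration by parts in $\zeta$ produces a clean trace at $\zeta=b$; and $\xi^\top\dot x=\tfrac12\partial_t(x^\top\mathcal{H}x)$ with integration by parts in $t$ yields $V(\tau)-V(0)$ for $V(t):=\int_a^b(\zeta-a)x^\top\mathcal{H}x\,d\zeta$. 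Matching the two sides gives
\[
(b-a)\int_0^\tau \xi(b,t)^\top P_1 \xi(b,t)\,dt \;=\; V(\tau)-V(0) + \int_0^\tau\!\!\int_a^b \xi^\top P_1 \xi\, d\zeta\, dt,
\]
where the left-hand side is bounded in absolute value by $(b-a)\|P_1\|\int_0^\tau\|\xi(b,t)\|^2\,dt$, and $|V(\tau)-V(0)|\le 2(b-a)\|x_0\|_X^2$ by contraction.

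The main obstacle is the sign-indefinite interior integral $\int_0^\tau\!\!\int_a^b\xi^\top P_1\xi\,d\zeta\,dt$, which cannot simply be dropped because $P_1$ is not sign-definite, together with the fact that the controlled boundary quantity is $\xi^\top P_1\xi$ at $\zeta=b$ rather than $\|\xi(b)\|^2$ directly. I would handle both by exploiting the diagonalization $P_1\mathcal{H}=S^{-1}\Delta S$ recalled before Theorem~\ref{theo2}: this decomposes the PDE into $n$ characteristic modes with real speeds $\lambda_i(\zeta)$, and for $\tau>(b-a)\max_{i,\zeta}|\lambda_i(\zeta)|^{-1}$ every mode that starts in $[a,b]$ has reached the dissipative boundary $\zeta=b$. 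Mode by mode one can then lower-bound $\int_0^\tau\|\xi(b,t)\|^2\,dt$ by a constant times the corresponding initial datum and absorb the interior term via a Gronwall-type estimate against $\int_0^\tau\|x(t)\|_X^2\,dt\le\tau\|x_0\|_X^2$; summing over modes closes the observability inequality and hence gives exponential stability. Case~2 of the hypothesis is handled symmetrically with the multiplier $2(b-\zeta)\xi$ and the dissipation acting at $\zeta=a$.
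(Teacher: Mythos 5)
The survey does not reproduce a proof of Theorem~\ref{T:9.2.3} (it cites \cite{VZGM09} and \cite[Theorem 9.1.3]{JaZw12}), so I am comparing your argument with the proof given there. Your overall strategy --- contraction semigroup, the dissipation estimate $\|x(\tau)\|_X^2\le\|x_0\|_X^2-k\int_0^\tau\|(\mathcal H x)(b,t)\|^2\,dt$ obtained from \eqref{eq:1.2.10}, and then a boundary estimate exploiting finite propagation speed --- is the right skeleton. However, the inequality you reduce everything to, namely
\[
\int_0^\tau\|(\mathcal H x)(b,t)\|^2\,dt\;\ge\;\delta\,\|x_0\|_X^2,
\]
is \emph{false} for this class of systems, so the reduction itself is the gap. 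Concretely, take $n=2$, $P_0=0$, $\mathcal H=I$, $P_1=\left[\begin{smallmatrix}1&0\\0&-1\end{smallmatrix}\right]$, with the absorbing boundary conditions $x_1(b,t)=0$, $x_2(a,t)=0$. Then $\left[(\mathcal Hx)^\top P_1(\mathcal Hx)\right]_a^b=-x_2(b)^2-x_1(a)^2\le-\|(\mathcal Hx)(b)\|^2$, so the first hypothesis holds with $k=1$; but for $x_0=(\phi,0)$ with $\phi$ compactly supported in $(a,b)$ the solution is a leftward transport of $\phi$ that never reaches $\zeta=b$, so $(\mathcal Hx)(b,t)\equiv 0$ and the left-hand side above is $0$ while $\|x_0\|_X>0$. (The system is nonetheless exponentially stable --- indeed $x(\tau)=0$ for $\tau\ge b-a$ --- which is exactly why the theorem survives while your observability inequality does not.) The correct target, and the one proved in the references, is the \emph{final-state} estimate
\[
\|x(\tau)\|_X^2\;\le\;c\int_0^\tau\|(\mathcal H x)(b,t)\|^2\,dt
\]
for $\tau$ larger than the transit time; combined with the dissipation estimate this gives $\|x(\tau)\|_X^2\le\tfrac{c}{c+k}\|x_0\|_X^2$ and hence exponential stability by the semigroup law.

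Your multiplier computation with $2(\zeta-a)\mathcal Hx$ is algebraically correct, but as you yourself note it leaves both the interior term $\int_0^\tau\!\int_a^b(\mathcal Hx)^\top P_1(\mathcal Hx)$ and the boundary term $(\mathcal Hx)(b)^\top P_1(\mathcal Hx)(b)$ sign-indefinite, and the proposed Gronwall absorption against $\tau\|x_0\|_X^2$ again aims at the wrong (initial-state) quantity. The proof in \cite[Lemma 9.1.2]{JaZw12} dispenses with this multiplier entirely: it introduces $F(\zeta)=\int_{\gamma(\zeta)}^{\tau-\gamma(\zeta)}x(\zeta,t)^\top\mathcal H(\zeta)x(\zeta,t)\,dt$, where $\gamma$ is built from the characteristic speeds of $P_1\mathcal H$, derives a differential inequality $|F'(\zeta)|\le\kappa F(\zeta)$ (a ``sideways'' energy estimate, Gronwall in the spatial variable propagating from $\zeta=b$ inward), and uses the shrinking time window together with the contraction property to bound $\|x(\tau)\|_X^2$ --- not $\|x_0\|_X^2$ --- by the boundary trace at $b$. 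If you replace your observability inequality by the final-state version and carry out that sideways estimate, your argument closes; as written, it does not.
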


By equation (\ref{eq:1.2.10}) we see that the left hand side can be regarded as twice the power flow at the boundary, or as twice the change of internal energy. So the above conditions can be interpreted as that the power flow at the boundary should be less than a negative constant times the energy at one of the boundaries.
\begin{example}
 We consider the lossless transmission line on the spatial interval $[a,b]$ as discussed in Examples \ref{ex1} with $u(t)=0$ and $R>0$. Let  $x \in X$ with  ${\mathcal H}x\in H^{1}((a,b);\mathbb R^n)$  and  $W_B\left[\begin{smallmatrix} f_\partial\\e_\partial  \end{smallmatrix}\right]=0$.  Using  \eqref{eq:1.2.10} and \eqref{eq:1.3} we get
\[
   \frac{1}{2} \left[ \left({\mathcal H}
  x\right)^\top (\zeta) P_1  \left({\mathcal H} x\right)(\zeta) \right]_a^b
  = \frac{x_2}{L}\left(a\right)\frac{x_1}{C}\left(a\right) - \frac{x_2}{L}\left(b\right)\frac{x_1}{C}\left(b\right).
  \]
As $W_B\left[\begin{smallmatrix} f_\partial\\e_\partial  \end{smallmatrix}\right]=0$, which in particular implies
$\frac{x_1}{C}\left(a\right)=0$ and 
$\frac{x_1}{C}\left(b\right)=R\frac{x_2}{L}\left(b\right)$, we obtain 
\begin{align*}
  \left({\mathcal H} x\right)^\top (b) P_1 \left({\mathcal H} x\right)(b) &- \left({\mathcal H} x\right)^\top (a) P_1 \left({\mathcal H} x\right)(a)  \\
  =&  - R \left(\frac{x_2}{L}\left(b\right)\right)^2   =  - \frac{R}{1+R} \|{\mathcal H}(b)x(b)\|^2 .
 \end{align*}
 Thus, if $R>0$, then by Theorem \ref{T:9.2.3}   the  lossless transmission line is exponentially stable.
\end{example}

\section{From the transmission line to the heat equation}

As seen in Section \ref{sec:1.3}, given a Dirac structure we can make different choices for the effort and flow variables. We illustrate this with the Dirac structure associated to the transmission line of Example \ref{E:2.4.3}. 

Thus we consider ${\mathcal E} = {\mathcal F} = L^2((a,b);{\mathbb R}^2) \times {\mathbb R}^2$, and 
\begin{align}
  \nonumber
    {\mathcal D} =&\ \left\{  \left( \begin{array}{c} f \\ f_{\partial}\\ e\\ e_{\partial}  \end{array} \right) \in {\mathcal F} \times {\mathcal E} \mid f \in L^2((a,b);{\mathbb R}^2), e  \in H^1((a,b);{\mathbb R}^2) \right.\\
      \label{eq:6HZ}
    &\qquad  \left. \left( \begin{array}{c} f_1 \\ f_{2}  \end{array} \right)= \left( \begin{array}{c} -\frac{de_2}{d\zeta} \\  -\frac{de_1}{d\zeta} \end{array} \right), \left( \begin{array}{c}  f_{\partial,1} \\ f_{\partial,2}  \\  e_{\partial,1} \\ e_{\partial,2}\end{array} \right) =
   \frac{1}{\sqrt{2}} \left( \begin{array}{c} -e_2(b)+e_2(a) \\ -e_1(b)+e_1(a)\\ e_1(b)+e_1(a)\\ e_2(b)+e_2(a)
   \end{array} \right)
    \right\}
  \end{align}
 This is the Dirac structure (\ref{eq:2HZ}) with $P_1 = \left[\begin{smallmatrix} 0 &-1\\ -1 &0 \end{smallmatrix} \right]$, and $P_0 =0$. 
  
 Instead of the choices (\ref{eq:3HZ}), we choose
 \begin{equation}
 \label{eq:7HZ}
   f_1= \dot{x},\quad f_2 = \frac{e_2}{\alpha} , \quad e_1= h x,
 \end{equation}
 where $\alpha$ and $h$ are functions in $L^{\infty} (a,b)$ which are positive and $\alpha^{-1}$, $h^{-1}$ are in $L^{\infty} (a,b)$ as well.
 
With this choice we obtain the partial differential equation 
 \begin{equation*}
 \label{eq:8HZ}
   \dot{x} = f_1 = - \frac{de_2}{d \zeta} = - \frac{d}{d\zeta} \left[ \alpha f_2 \right] = - \frac{d}{d\zeta} \left[ -\alpha \frac{de_1}{d \zeta} \right] = \frac{d}{d\zeta} \left[ \alpha \frac{d}{d \zeta} \left[ hx\right] \right] ,
 \end{equation*}
 where we combined the choices from (\ref{eq:7HZ}) with the definition of $f_1,f_2$ from (\ref{eq:6HZ}). For $h=1$, the associated partial differential equation is the heat equation, 
 \begin{equation}
 \label{eq:9HZ}
   \frac{\partial x}{\partial t}(\zeta,t) = \frac{\partial}{\partial\zeta} \left[ \alpha(\zeta) \frac{\partial}{\partial \zeta} \left[ x(\zeta,t)\right] \right], \qquad \zeta \in (a,b), t\geq 0. 
 \end{equation}
We know that we can express our boundary conditions in the boundary flow and effort. Using (\ref{eq:6HZ}) and (\ref{eq:7HZ}), it yields
\[
  \left( \begin{array}{c}  f_{\partial,1} \\ f_{\partial,2}  \\  e_{\partial,1} \\ e_{\partial,2}\end{array} \right) =
   \frac{1}{\sqrt{2}} \left( \begin{array}{c} \alpha(b) \frac{d hx}{d \zeta} (b)- \alpha(a) \frac{d hx}{d \zeta}(a) \\ -h(b)x(b)+h(a)x(a)\\ h(b)x(b)+h(a)x(a)\\ -\alpha(b) \frac{d hx}{d \zeta}(b)- \alpha(a) \frac{d hx}{d \zeta}(a)
   \end{array} \right)
\]
Combining (\ref{eq:1.3.14}) with (\ref{eq:1HZ}), we obtain
\begin{align*}
  f_{\partial}^Te_{\partial} =&\ \int_a^b f(\zeta)^T e(\zeta) d\zeta\\
   =&\  \int_a^b f_1(\zeta)e_1(\zeta) +  f_2(\zeta)e_2(\zeta) d\zeta\\
    =&\ \int_a^b \dot{x}(\zeta,t) h (\zeta) x(\zeta,t ) d\zeta + \int_a^b f_2(\zeta) \alpha(\zeta) f_2(\zeta) d\zeta .
 \end{align*}
 If we define the ``energy'' for the PDE (\ref{eq:9HZ}) as $$H(t) = \int_a^b x(\zeta,t) h (\zeta) x(\zeta,t ) d\zeta,$$ then 
 \[
   \dot{H}(t) = 2 \int_a^b \dot{x}(\zeta,t) h (\zeta) x(\zeta,t ) d\zeta = 2 f_{\partial}^Te_{\partial} - 2\int_a^b f_2(\zeta) \alpha(\zeta) f_2(\zeta) d\zeta \leq 2 f_{\partial}^Te_{\partial},
 \]
 where we have used that $\alpha$ is positive. Thus,  these system have internal dissipation, i.e, when we have chosen the boundary conditions such that $f_{\partial}^Te_{\partial}=0$, we still have that $H$ will decrease. Therefore we have shown that Dirac structures may be associated to systems with dissipation.  The relation between $f_2$ and $e_2$ can be regarded as adding a {\em closure relation} or {\em resistive} part to the Dirac structure, as is shown below.
 \begin{figure}[ht]
  \centering
   \includegraphics[scale=1.0]{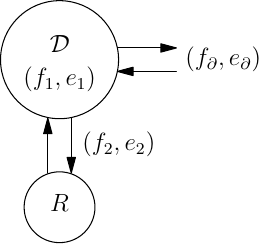}
  \caption{A Dirac structure wit a resistive part}
  \label{fig:1.4}
\end{figure}

 \section {Riesz basis  for port-Hamiltonian systems}
 
 A well-known technique for finding a non-zero solution of a homogeneous linear partial differential equation, is the method of separation of variables. Thus a solution is sought of the form $x(t,\zeta) = f(t) g(\zeta)$ which satisfies the boundary conditions, but not necessarily the initial condition. When a collection of these solutions are found, then by the linearity of the partial differential equation, an arbitrary sum still satisfies the equation and the boundary conditions. For the heat and wave equation this leads to the general form of the solution. 
 
The separation of variables method is usually applied to scalar homogeneous linear partial differential equations, but it can be easily extended to our class of port-Hamiltonian systems. To keep notation simple we use the abstract differential equation notation $\dot{x}(t) =Ax(t)$, see (\ref{eqn:cauchy}) with $A$ and its domain given by (\ref{operatorA}) and (\ref{domainA}) respectively. 

For the equation $\dot{x}(t) = A x(t)$ we assume that 
\begin{equation}
\label{eq:11HZ}
  x(t) = f(t) \phi(\zeta), \mbox{ with } f: [0,\infty) \mapsto {\mathbb R}, \mbox{ and } \phi: [a,b] \mapsto {\mathbb R}^n.
\end{equation}
This leads to the equation
\[
  \dot{f}(t) \phi(\zeta) = f(t) (A\phi)(\zeta).
\]
Assuming $f(t) \neq 0$ for all $t$, we get the equivalent equation
\[
  \frac{\dot{f}(t)}{f(t)}  \phi(\zeta) = (A\phi)(\zeta).
\]
Now the left-hand side depends on $t$ and $\zeta$, whereas the right-hand side only depends on $\zeta$. Thus the expression $\dot{f}/f$ may not depend on $t$, and so it is a constant. We call this constant $\lambda$, and we get the following two equations
\[
  \dot{f}(t) = \lambda f(t) \mbox{ and } \lambda \phi(\zeta) = (A\phi)(\zeta).
\]
The first one is easy to solve, and gives us that $f(t) = f_0 e^{\lambda t}$. The second one becomes
\begin{equation}
\label{eq:12HZ}
  P_1 \frac{d{\mathcal H} \phi}{d\zeta} + P_0 {\mathcal H} (\zeta) \phi(\zeta) = \lambda \phi(\zeta), \quad W_B \left[\begin{array}{cc} P_1 & -P_1 \\ I & I \end{array} \right]\left(\begin{matrix}  {\mathcal H} (b) \phi(b) \\   {\mathcal H} (a) \phi(a) \end{matrix} \right) =0.
\end{equation}
The last equation has to hold, since $x$ is assumed to be a classical solution, thus an element in the domain of $A$. Since $f$ is a scalar only depending on $t$, we see that the boundary conditions of $x$ transfer to $\phi$. Like for matrices, the equation $A\phi=\lambda \phi$ is called an {\em eigenvalue/eigenfunction} equation.

Summarising the above we see that a non-zero solution of the form (\ref{eq:11HZ}) exists if and only if there exists a non-zero solution of (\ref{eq:12HZ}) for some $\lambda$. In (\ref{eq:12HZ}) we have an ordinary differential equation and boundary conditions. It is well-known that this differential equation will always have a solution for any $\lambda$, and so the boundary conditions will form the restricting condition.  For instance, when $P_1=1, P_0=0$, and ${\mathcal H}=1$, we see that 
\[ 
  \phi(\zeta) = \phi_0 e^{\lambda \zeta}
\]
is the solution of the differential equation $\phi' =\lambda\phi$. When the boundary condition equals $\phi(a) =0$, then there exists no $\lambda $ and non-zero $\phi_0$ for which (\ref{eq:12HZ}) holds. 

However, when the boundary conditions is given as $\phi(a) = \phi(b)$, then we see that we have infinitely many solutions. Namely,
\[
   \phi(a) = \phi(b) \Leftrightarrow e^{\lambda a} = e^{\lambda b}  \Leftrightarrow \lambda = \frac{2n\pi i}{b-a},~ n \in {\mathbb Z}.
\]   
In the second situation we find as solutions of (\ref{eq:11HZ})
\[
  x_n(t,\zeta) = a_n e^{\lambda_n t} e^{\lambda_n \zeta}, \mbox{ with } \lambda_n = \frac{2n\pi i}{b-a},~ n \in {\mathbb Z}
\]
Since our PDE is linear, we have that $x(t,\zeta) = \sum_{n=-N}^N a_n e^{\lambda_n t} e^{\lambda_n \zeta}$ is also a solution. The initial condition for this solution is $x(0,\zeta) = \sum_{n=-N}^N a_ne^{\lambda_n \zeta}$. Now the last expression reminds us of the complex Fourier series, which says that every function $x_0 \in L^2(a,b)$ can be written as
\[
  x_0(\zeta) = \sum_{n=-\infty}^{\infty} a_ne^{\lambda_n \zeta}
\]
with $a_n$ the Fourier coefficients which form an $\ell^2({\mathbb Z})$-sequence. Based on this we assert that 
\[
  x(t,\zeta) = \sum_{n=-\infty}^{\infty} a_n e^{\lambda_n t} e^{\lambda_n \zeta}
\]
is the solution of our partial differential equation for the given $x_0$. This is indeed the case, but it is a weak solution. 

Thus depending on the boundary conditions we could have no solution of (\ref{eq:11HZ}) or infinitely many which even form the basis for all solutions of this equation. The following recent theorem shows that we have a basis of all solutions if and only if the partial differential equation possesses a solution forward and backwards in time. 
Next we characterize all boundary conditions such that the all solutions of the port-Hamiltonian system
\begin{align}\nonumber
  \frac{\partial x}{\partial t}(\zeta,t) &= P_1 \frac{\partial }{\partial \zeta} \left({\mathcal H}(\zeta) x(\zeta,t) \right) + P_0 \left( {\mathcal H}(\zeta) x(\zeta,t) \right), \quad \zeta\in (a,b), t>0,\\
x(\zeta,0) &= x_0(\zeta), \qquad \zeta\in (a,b),\label{eqn:phsriesz} \\
 0 &=\widetilde W_{B} \begin{pmatrix} {\mathcal
        H}(b) x(b,t) \\ {\mathcal
        H}(a) x(a,t) \end{pmatrix},\qquad t>0,\nonumber
\end{align}
can be represented as a series of basis functions. Equivalently, we characterize all port-Hamiltonian systems \eqref{eqn:phsriesz} such that the eigenspace of the corresponding system operator $A$ form a  Riesz basis of subspaces\footnote{We have to consider subspaces, i.e., eigenspaces, because the eigenvalue could have multiplicity two or higher. Furthermore we allow for generalised eigenfunctions as well.} of the state space $X$. Again 
 $P_1\in \mathbb R^{n\times n}$ is invertible and self-adjoint, $P_0\in \mathbb R^{n\times n}$  is  skew-adjoint,  for every $\zeta \in [a,b]$ the matrix
${\mathcal H}(\zeta)$ is self-adjoint  and there exist constants $c,C>0$, such that $cI \leq
{\mathcal H}(\zeta) \leq C I$ for almost every $\zeta\in[a,b]$, and   $\widetilde W_{B}$ is a $n\times 2n$-matrix with full row rank. We split $\widetilde W_B=\left[\begin{smallmatrix} W_b & W_a \end{smallmatrix}\right]$. By $Z^-(b)$ we denote the span of the eigenvectors of $P_1{\mathcal H}(b)$ corresponding to its negative eigenvalues and 
$Z^+(a)$ is the span of the eigenvectors of $P_1{\mathcal H}(a)$ corresponding to its positive eigenvalues.

\begin{theorem}[\cite{JacKai21}]
The following are equivalent:
\begin{enumerate}
 \item The eigenspace of the  system operator $A$ of \eqref{eqn:phsriesz} form a  Riesz basis of subspaces of the state space $X$.
 \item \begin{enumerate}
\item For every $x_0\in X$ the port-Hamiltonian system \eqref{eqn:phsriesz} has a unique weak solution.
\item  For every $x_0\in X$ the port-Hamiltonian system \eqref{eqn:phsriesz} with $P_0$ replaced by $-P_0$ and $P_1$ replaced by $-P_1$ has a unique weak solution.
\end{enumerate}
\item $ W_b{\mathcal H}(b)Z^-(b)\oplus W_a{\mathcal H}(a)Z^+(a)= W_b{\mathcal H}(b)Z^+(b)\oplus W_a{\mathcal H}(a)Z^-(a)=\mathbb R^n$.
\end{enumerate}
\end{theorem}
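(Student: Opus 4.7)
The plan is to prove the cyclic chain $(1) \Rightarrow (2) \Rightarrow (3) \Rightarrow (1)$. The core content of the theorem sits in the equivalence $(2) \Leftrightarrow (3)$, which is a concrete boundary-condition criterion for the system operator $A$ and for its time-reversed counterpart to each generate a $C_0$-semigroup, i.e.\ for $A$ to generate a $C_0$-group. The equivalence $(1) \Leftrightarrow (2)$ then rides on the fact that port-Hamiltonian operators are bounded perturbations (in the energy norm) of an essentially skew-adjoint first-order operator, whose spectrum is automatically confined to a vertical strip.

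For $(2) \Leftrightarrow (3)$, I would first diagonalise $P_1\mathcal{H}(\zeta) = S(\zeta)^{-1}\Delta(\zeta)S(\zeta)$ and pass to characteristic variables $y = S\mathcal{H}x$, so that the homogeneous PDE decouples (up to bounded lower-order terms coming from $P_0$ and $S'$) into transport equations with signed speeds $\Delta(\zeta)$. At $\zeta = a$ the characteristics entering the interval have positive speed, so their traces live in $\mathcal{H}(a)Z^+(a)$; at $\zeta = b$ they have negative speed, with traces in $\mathcal{H}(b)Z^-(b)$. Forward well-posedness of the Cauchy problem in $X$ is then equivalent to the linear map $\mathcal{H}(b)Z^-(b) \oplus \mathcal{H}(a)Z^+(a) \to \mathbb{R}^n$, $(\xi_b,\xi_a) \mapsto W_b\xi_b + W_a\xi_a$, being a bijection --- this is exactly the first algebraic identity in (3). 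Reversing time swaps the roles of inflow and outflow at each boundary, hence of $Z^+$ and $Z^-$, and delivers the second identity. Where Theorem~\ref{theo1} requires $W_B\Sigma W_B^\top \geq 0$, I would invoke the more general boundary-triples version of the port-Hamiltonian generation theorem to handle arbitrary sign signatures, combined if needed with a rescaling $\tilde x(\zeta,t) = e^{-\mu t}x(\zeta,t)$, which only shifts the spectrum and does not affect the qualitative well-posedness question.

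For $(1) \Rightarrow (2)$, once the eigenvalues $\{\lambda_n\}$ and the spectral projections $\{P_n\}$ of $A$ form a Riesz basis of subspaces, one can define the candidate group by $T(t)x = \sum_n e^{\lambda_n t}P_n x$. Since $A$ differs from the (suitably conjugated) skew-adjoint operator $P_1\partial_\zeta\mathcal{H}$ only by a bounded perturbation in the energy inner product, $\sup_n |\mathrm{Re}\,\lambda_n| < \infty$, so the series converges in $X$ uniformly on compact time intervals and produces a $C_0$-group, giving (2). Conversely, for $(2) \Rightarrow (1)$, I would use that the group property forces both $A$ and $-A$ to be generators, hence in particular to have spectrum in a vertical strip; combined with the compactness of the resolvent (from the Sobolev embedding on the bounded interval $(a,b)$) and the fact that the eigenvalues of $A$ are asymptotically distributed along two vertical lines, a Bari-type comparison with the Riesz basis of the constant-coefficient model $\mathcal{H}\equiv I$, $P_0\equiv 0$ then upgrades this to the Riesz basis of subspaces property in the general case.

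The principal obstacle is the direction $(2) \Rightarrow (1)$: extracting a Riesz basis of subspaces from the bare existence of a $C_0$-group is delicate in the low-regularity setting where $\mathcal{H}$ is only essentially bounded and where generalised eigenfunctions may appear (which is why one must work with Riesz bases of subspaces rather than of vectors). The technical core is the construction of a bounded invertible similarity between $A$ and its constant-coefficient counterpart that intertwines their spectral decompositions, together with sharp asymptotic bounds on the eigenfunctions to prove quadratic closeness, in the sense of Bari, of the two systems of eigenprojections and to keep track of the geometry of eigenvalue clusters along the two asymptotic vertical lines.
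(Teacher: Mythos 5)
First, note that the paper itself offers no proof of this theorem; it is quoted from \cite{JacKai21}, so your attempt has to be measured against the argument given there. Your overall architecture --- read condition (3) as invertibility of the inflow/outflow boundary map after diagonalising $P_1\mathcal{H}$, identify (2) with generation of a $C_0$-group, and then pass between group generation and the Riesz basis property --- is the right skeleton. Two of your supporting steps, however, do not survive scrutiny. The claim that the spectrum of $A$ is ``automatically confined to a vertical strip'' because $A$ is a bounded perturbation of an essentially skew-adjoint operator is false as stated: passing from the skew-adjoint realisation to $A$ is a change of \emph{boundary conditions}, hence of the domain, and is not a bounded perturbation; when condition (3) fails the point spectrum can have unbounded real part, or be empty (as for $\phi'=\lambda\phi$ with $\phi(a)=0$), and the strip confinement is precisely something that must be extracted from the asymptotics of the characteristic determinant under hypothesis (3) or (1). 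Likewise, the equivalence of forward well-posedness with bijectivity of $(\xi_b,\xi_a)\mapsto W_b\mathcal{H}(b)\xi_b+W_a\mathcal{H}(a)\xi_a$ on $Z^-(b)\times Z^+(a)$ is itself a nontrivial generation theorem for non-contractive port-Hamiltonian semigroups (due to Jacob, Morris and Zwart), not a corollary of Theorem \ref{theo1}; your $e^{-\mu t}$ rescaling does not help here, since it replaces $A$ by $A-\mu I$ and changes nothing about whether a semigroup exists.

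The decisive gap is $(2)\Rightarrow(1)$, which you correctly flag as the principal obstacle but then only describe rather than prove, and the route you sketch --- a Bari-type quadratic-closeness comparison with a constant-coefficient model --- is both harder than necessary and doubtful in the stated regularity class, where $\mathcal{H}$ is merely essentially bounded and eigenfunction asymptotics sharp enough for quadratic closeness are not available. The proof in \cite{JacKai21} avoids eigenfunction estimates altogether: having established that (2) forces $A$ to generate a $C_0$-\emph{group}, it invokes Zwart's theorem on Riesz bases for generators of $C_0$-groups (spectrum contained in a strip, discrete, decomposable into finitely many uniformly separated sequences, verified from the characteristic equation), which delivers the Riesz basis of spectral subspaces directly from the group property. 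If you want a completable proof you should replace the Bari comparison by that theorem; as it stands, your text is a plausible roadmap with the central step missing and one supporting argument that is incorrect.
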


\section{Exercise}

\begin{example}
\label{E7.1.1}
We consider the {\em vibrating string} as depicted in Figure \ref{fig:wave1-bis}.
\begin{figure}[htb]
    \centering
   \includegraphics[scale=0.7]{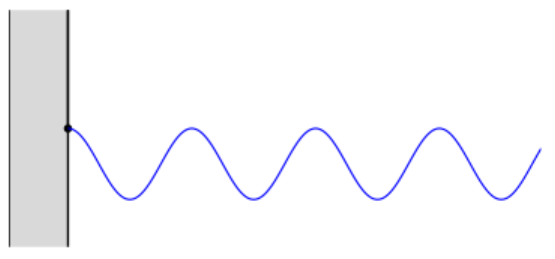}
    \caption{The vibrating string}
    \label{fig:wave1-bis}
\end{figure}
The string is fixed at the left hand-side. We allow that a force $u$ may be applied at the right hand-side and as output $y$ we choose the velocity  at the right hand side. The model of the (undamped) vibrating string is given by
  \begin{equation}
  \label{eq:7.1.1}
    \frac{\partial^2 w}{\partial t^2} (\zeta,t) = \frac{1}{\rho(\zeta)} \frac{\partial }{\partial \zeta} \left( T(\zeta) \frac{\partial w}{\partial \zeta}(\zeta,t) \right),
  \end{equation}
 where $\zeta\in [a,b]$ is the spatial variable, $w(\zeta,t)$
  is the vertical position of the string at place $\zeta$ and time $t$, $T$ is the Young's modulus
  of the string, and $\rho$ is the mass density, which may vary along the string. We assume that both functions $\rho$ and $T$ are continuously differentiable.
This system has the energy/Hamiltonian
  \begin{equation}
  \label{eq:7.1.2}
    E(t) = \frac{1}{2} \int_a^b \rho(\zeta) \left(\frac{\partial w}{\partial t}(\zeta,t)\right)^2 + T(\zeta) \left(\frac{\partial w}{\partial \zeta}(\zeta,t)\right)^2 d \zeta.
  \end{equation} 
\vspace{1ex}

\begin{enumerate}
\item Formulate the wave equation as port-Hamiltonian system. What are the matrices $P_0$, $P_1$, ${\mathcal H}$, $\widetilde W_B$ and $\widetilde W_C$?
\item Show that for $u=0$ and every initial condition the wave equation has a unique weak solution.
\item Show that the wave equation (with inputs and outputs) is well-posed.
\item We now attach a damper at the right hand side, that is, the force at that end equals a (negative) constant times the velocity at that end, i.e.
$$ T(b)\frac{\partial w}{\partial \zeta}(b,t)=-k\frac{\partial w}{\partial t}(b,t), \qquad k>0.$$
Show that the resulting wave equation is exponentially stable.
\end{enumerate}

\end{example}

 \section*{Conclusions and further results}
 
  In this article we presented an introduction to infinite-dimen\-sional systems theory. However, we only consider port-Hamiltonian systems, where the system operator 
  is given by $P_1 \frac{d }{d \zeta} \left[ {\mathcal H} x \right] + P_0 \left[{\mathcal H} x\right]$.
  In order to model examples like the Euler-Bernoulli beam, Schr\"odinger equation or Airy's equation, more general port-Hamiltonian systems of the form 
  \begin{align*}
   \frac{\partial x}{\partial t}(\zeta,t) = \sum_{j=0}^NP_j \frac{\partial^j }{\partial \zeta^j} \left[ {\mathcal H}(\zeta) x(\zeta,t) \right] 
   \end{align*}
  need to be investigated. Several results extend to this more general class of port-Hamiltonian systems. The contraction semigroup generation results have been shown by Le Gorrec, Zwart and Maschke \cite{GoZM05}. Further, stability has been investigated in  Augner and Jacob \cite{AuJa14}. 
Infinite-dimensional port-Hamiltonian system on an multi-dimensional spatial domain have been investigated in Kurula and Zwart \cite{KuZw15} and Skrepek \cite{Sk21}.

\def\cprime{$'$}


\begin{thebibliography}{10}

\bibitem{AuJa14}
B.~Augner and B.~Jacob.
\newblock Stability and stabilization of infinite-dimensional linear
  port-{H}amiltonian systems.
\newblock {\em Evol. Equ. Control Theory}, 3(2):207--229, 2014.

\bibitem{BCEJLLM09}
A.~Baaiu, F.~Couenne, D.~Eberard, C.~Jallut, L.~Lef\`{e}vre, Y.~{Le Gorrec},
  and B.~Maschke.
\newblock Port-based modelling of mass transport phenomena.
\newblock {\em Math. Comput. Model. Dyn. Syst.}, 15(3):233--254, 2009.

\bibitem{CeSB07}
J.~Cervera, A.~J. van~der Schaft, and A.~Ba{\~n}os.
\newblock Interconnection of port-{H}amiltonian systems and composition of
  {D}irac structures.
\newblock {\em Automatica J. IFAC}, 43(2):212--225, 2007.

\bibitem{EbMS07}
D.~Eberard, B.~M. Maschke, and A.~J. van~der Schaft.
\newblock An extension of {H}amiltonian systems to the thermodynamic phase
  space: towards a geometry of nonreversible processes.
\newblock {\em Rep. Math. Phys.}, 60(2):175--198, 2007.

\bibitem{HDLM10}
B.~Hamroun, A.~Dimofte, L.~Lef{\`e}vre, and E.~Mendes.
\newblock Control by interconnection and energy-shaping methods of port
  {H}amiltonian models. {A}pplication to the shallow water equations.
\newblock {\em Eur. J. Control}, 16(5):545--563, 2010.

\bibitem{JacKai21}
B.~Jacob, J.~T. Kaiser, and H.~Zwart.
\newblock Riesz bases of port-{H}amiltonian systems.
\newblock {\em SIAM J. Control Optim.}, 59(6):4646--4665, 2021.

\bibitem{JaZw12}
B.~Jacob and H.~Zwart.
\newblock {\em Linear port-{H}amiltonian systems on infinite-dimensional
  spaces}, volume 223 of {\em Operator Theory: Advances and Applications}.
\newblock Birkh\"{a}user/Springer Basel AG, Basel, 2012.
\newblock Linear Operators and Linear Systems.

\bibitem{JeSc09}
D.~Jeltsema and A.~J. van~der Schaft.
\newblock Lagrangian and {H}amiltonian formulation of transmission line systems
  with boundary energy flow.
\newblock {\em Rep. Math. Phys.}, 63(1):55--74, 2009.

\bibitem{KuZw15}
M.~Kurula and H.~Zwart.
\newblock Linear wave systems on {$n$}-{D} spatial domains.
\newblock {\em Internat. J. Control}, 88(5):1063--1077, 2015.

\bibitem{KZSB10}
M.~Kurula, H.~Zwart, A.~van~der Schaft, and J.~Behrndt.
\newblock Dirac structures and their composition on {H}ilbert spaces.
\newblock {\em J. Math. Anal. Appl.}, 372(2):402--422, 2010.

\bibitem{GoZM05}
Y.~{Le Gorrec}, H.~Zwart, and B.~Maschke.
\newblock Dirac structures and boundary control systems associated with
  skew-symmetric differential operators.
\newblock {\em SIAM J. Control Optim.}, 44(5):1864--1892 (electronic), 2005.

\bibitem{MM05}
A.~Macchelli and C.~Melchiorri.
\newblock Control by interconnection of mixed port {H}amiltonian systems.
\newblock {\em IEEE Trans. Automat. Control}, 50(11):1839--1844, 2005.

\bibitem{OvSME02}
R.~Ortega, A.~van~der Schaft, B.~Maschke, and G.~Escobar.
\newblock Interconnection and damping assignment passivity-based control of
  port-controlled {H}amiltonian systems.
\newblock {\em Automatica J. IFAC}, 38(4):585--596, 2002.

\bibitem{Sk21}
N.~Skrepek.
\newblock Well-posedness of linear first order port-{H}amiltonian systems on
  multidimensional spatial domains.
\newblock {\em Evol. Equ. Control Theory}, 10(4):965--1006, 2021.

\bibitem{vS06}
A.~van~der Schaft.
\newblock Port-{H}amiltonian systems: an introductory survey.
\newblock In {\em International {C}ongress of {M}athematicians. {V}ol. {III}},
  pages 1339--1365. Eur. Math. Soc., Z\"urich, 2006.

\bibitem{ScMa02}
A.~van~der Schaft and B.~Maschke.
\newblock Hamiltonian formulation of distributed-parameter systems with
  boundary energy flow.
\newblock {\em J. Geom. Phys.}, 42(1-2):166--194, 2002.

\bibitem{VZGM09}
J.~Villegas, H.~Zwart, Y.~{Le Gorrec}, and B.~Maschke.
\newblock Exponential stability of a class of boundary control systems.
\newblock {\em IEEE Trans. Automat. Control}, 54(1):142--147, 2009.

\bibitem{ZGMV10}
H.~Zwart, Y.~{Le Gorrec}, B.~Maschke, and J.~Villegas.
\newblock Well-posedness and regularity of hyperbolic boundary control systems
  on a one-dimensional spatial domain.
\newblock {\em ESAIM: Control, Optimisation and Calculus of Variations},
  16(4):1077--1093, Oct 2010.

\end{thebibliography}

\end{document}